\theoremstyle{definition}
\newtheorem{definition}{Definition}[section]
\theoremstyle{plain}
\newtheorem{lemma}[definition]{Lemma}
\newtheorem*{theorem}{Theorem}
\newtheorem*{corollary}{Corollary}
\newcommand{\nn}{\!\!\!\!}
\begin{document}

\title[A polynomial identity in Lie-Yamaguti algebras]
{A polynomial identity for the bilinear operation in Lie-Yamaguti algebras}

\author{Murray R. Bremner}

\address{Department of Mathematics and Statistics, University of Saskatchewan, Canada}

\email{bremner@math.usask.ca}

\subjclass[2010]{Primary 17A30. Secondary 17-04, 17A32, 17A40, 17A50, 17B01}

\keywords{Lie-Yamaguti algebras, polynomial identities, computer algebra, representation theory of the symmetric group}

\thanks{This research was supported by a Discovery Grant from NSERC, 
the Natural Sciences and Engineering Research Council of Canada.
The author thanks Pilar Benito and the Department of Mathematics and Computer Science 
at the University of La Rioja in Logro\~no for their hospitality in April and May 2013.}

\begin{abstract}
We use computer algebra to demonstrate the existence of a multilinear polynomial identity of degree 8 satisfied 
by the bilinear operation in every Lie-Yamaguti algebra.
This identity is a consequence of the defining identities for Lie-Yamaguti algebras, but is not a consequence of 
anticommutativity.
We give an explicit form of this identity as an alternating sum over all permutations of the variables in a polynomial 
with 8 terms.
Our computations also show that such identities do not exist in degrees less than 8.
\end{abstract}

\maketitle

\allowdisplaybreaks


\section{Introduction}

The binary-ternary structures now called Lie-Yamaguti algebras were introduced by Yamaguti \cite{Yamaguti} in his study of 
torsion and curvature on symmetric spaces.  

\begin{definition} \label{LYdefinition}
A vector space $L$ with a bilinear operation 
$[-,-]\colon L \times L \to L$ 
and a trilinear operation 
$\langle -,-,- \rangle\colon L \times L \times L \to L$ is a \textbf{Lie-Yamaguti algebra} if the operations satisfy 
the following polynomial identities for all $a, b, c, d, e \in L$:
  \begin{align}
  &
  [a,a] \equiv 0,
  \label{LY1}
  \\
  &
  \langle a, a, b \rangle \equiv 0,
  \label{LY2}
  \\
  &
  \sum_{a,b,c} \big( \, [[a,b],c] + \langle a, b, c \rangle \big) \equiv 0 \;\; \text{(cyclic sum)},
  \label{LY3}
  \\
  &
  \sum_{a,b,c} \langle \, [a,b], \, c, \, d \, \rangle \equiv 0 \;\; \text{(cyclic sum)},
  \label{LY4}
  \\
  &
  \langle \, a, \, b, \, [c,d] \, \rangle 
  \equiv 
  [ \, \langle a, b, c \rangle, \, d \, ] + 
  [ \, c, \, \langle a, b, d \rangle \, ],
  \label{LY5}
  \\
  &
  \langle \, a, \, b, \, \langle c, d, e \rangle \, \rangle 
  \equiv
  \langle \, \langle a, b, c \rangle, \, d, \, e \, \rangle +
  \langle \, c, \, \langle a, b, d \rangle, \, e \, \rangle +
  \langle \, c, \, d, \, \langle a, b, e \rangle \, \rangle.
  \label{LY6}
  \end{align}
\end{definition}

From this it is clear that Lie-Yamaguti algebras are a generalization of Lie algebras and Lie triple systems: 
if the binary (resp.~ternary) operation is identically zero, we obtain a Lie triple system (resp.~Lie algebra).
Yamaguti called these structures \emph{general Lie triple systems}; the modern name was introduced by Kinyon and Weinstein
\cite{KinyonWeinstein}.

Given two elements $a, b \in L$ we define a linear operator $D_{a,b} \in \mathrm{End}(L)$ by 
$D_{a,b}(c) = \langle a,b,c \rangle$; this is a derivation of both the bilinear and trilinear operations.
We denote by $D \subseteq \mathrm{End}(L)$ the subspace spanned by all $D_{a,b}$ $(a, b \in L)$, and define an 
anticommutative product $\cdot$ on $G = D \oplus L$ as follows:
  \[
  D_{a,b} \cdot D_{c,d} = D_{\langle a,b,c \rangle,d} + D_{c,\langle a,b,d \rangle},
  \quad
  D_{a,b} \cdot c = \langle a,b,c \rangle,
  \quad
  a \cdot b = D_{a,b} + [a,b].
  \]
Yamaguti showed that $G$ is a Lie algebra, the \emph{standard enveloping algebra} of $L$.
Furthermore, $D$ is a Lie subalgebra of $G$, and $D \cdot L \subseteq L$, giving a \emph{reductive decomposition} of $G$.
Finally, we recover the bilinear and trilinear operations on $L$ from the product in $G$ by the formulas
$[a,b] = p_L( a \cdot b )$ and $\langle a,b,c \rangle = p_D( a \cdot b ) \cdot c$ where $p_D$ and $p_L$ are the projections
onto $D$ and $L$.
The structure theory of finite dimensional Lie-Yamaguti algebras has been developed during the last few years by Benito,
Elduque, and Mart\'in-Herce \cite{BEM2009,BEM2011}.

Loday \cite{Loday} introduced the notion of a \emph{Leibniz algebra}, which is a nonassociative algebra $A$ with product
$\{a,b\}$ satisfying the \emph{derivation identity},
  \[
  \{ \{a,b\}, c \} \equiv \{ \{a,c\}, b \} + \{ a, \{b,c\} \}.
  \]
Kinyon and Weinstein showed that the skew-symmetrization $[a,b] = \{a,b\}-\{b,a\}$ of the product on any Leibniz algebra 
can be realized as the bilinear operation on a Lie-Yamaguti algebra, where the trilinear operation is 
$\langle a,b,c \rangle = -\frac14 \{ \{ a, b \}, c \}$.

In spite of all this, the bilinear operation in a Lie-Yamaguti algebra remains somewhat mysterious; the only 
identity in Definition \ref{LYdefinition} which involves the bilinear operation by itself is anticommutativity \eqref{LY1}.
This raises the following question: 
  \begin{quote}
\emph{Let $LY(X)$ be the free Lie-Yamaguti algebra on the set $X$ of generators, and let $BLY(X)$ be the subalgebra 
generated by $X$ using only the bilinear operation.  Is $BLY(X)$ a free anticommutative algebra?}
  \end{quote}
(For free anticommutative algebras, see Shirshov \cite{Shirshov}.)
This question can be reformulated in terms of polynomial identities as follows:
  \begin{quote}
\emph{Do there exist polynomial identities satisfied by the operation $[-,-]$ in every Lie-Yamaguti algebra which follow 
from the defining identities \eqref{LY1}--\eqref{LY6} but do not follow from anticommutativity \eqref{LY1} alone?}
  \end{quote}
From the calculations of the author and Hentzel \cite{BremnerHentzel}, it is known that if such an identity exists, it must
have degree at least 7.
That paper studied the anticommutative product on the $\mathfrak{sl}_2(\mathbb{C})$-module $V(10)$ obtained from the 
decomposition
  \[
  \Lambda^2 V(10) \approx V(18) \oplus V(14) \oplus V(10) \oplus V(6) \oplus V(2).
  \]
In this isomorphism, the direct sum of $V(10)$ with the adjoint module $V(2) \cong \mathfrak{sl}_2(\mathbb{C})$ can be 
given the structure of a Lie algebra of type $G_2$, and hence $V(10)$ becomes a Lie-Yamaguti algebra where the operations 
$[a,b] = \beta(a,b)$ and $\langle a, b, c \rangle = \tau(a,b) \cdot c$ are determined by the projections 
$\beta\colon \Lambda^2 V(10) \to V(10)$ and $\tau\colon \Lambda^2 V(10) \to V(2)$.
The simplest identities satisfied by the bilinear operation $\beta(a,b)$ which do not follow from anticommutativity have
degree 7.
Similar calculations for the Lie-Yamaguti structure on $V(6)$ were done by the author and Douglas \cite{BremnerDouglas}.
(For a detailed analysis of Lie-Yamaguti algebras related to $G_2$, see Benito, Draper and Elduque \cite{BDE}.)

In this note, we describe calculations with the computer algebra system Maple that demonstrate the
existence of a polynomial identity in degree~8 which is satisfied by the bilinear operation in every Lie-Yamaguti algebra
but which is not a consequence of anticommutativity. 
We give an explicit statement of this identity, and show that such identities do not exist in degrees less than 8.


\section{Polynomial identities in Lie-Yamaguti algebras}

We consider binary-ternary algebras, or BT-algebras for short, with a bilinear operation $[-,-]$ and a trilinear operation
$\langle -,-,- \rangle$ satisfying identities \eqref{LY1} and \eqref{LY2}.

\begin{definition}
We write $BT(n)$ for the multilinear subspace of degree $n$ in the free BT-algebra on $n$ generators.
This space is a module over the symmetric group $S_n$ acting by permutations of the subscripts: 
$\sigma \cdot x_i = x_{\sigma(i)}$ for $\sigma \in S_n$ and $1 \le i \le n$.
\end{definition}

We need to determine the number $b t_n$ of inequivalent placements of the binary and ternary operation symbols 
in a monomial of degree $n$ in a free BT-algebra.

\begin{lemma}
The number $bt_n$ of binary-ternary association types in degree $n$ is given by $bt_1 = 1$ 
and the following recurrence relation:
  \begin{align*}
  bt_n 
  &= \!\!\!
  \sum_{i=1}^{\lfloor (n-1)/2 \rfloor} \!\!\! bt_{n-i} bt_i 
  + 
  \binom{bt_{n/2}{+}1}{2}
  \\
  &\quad
  +
  \sum_{i=1}^{n-2} 
  \Bigg[
  \sum_{j=1}^{\lfloor (n-i-1)/2 \rfloor} \!\!\! bt_{n-i-j} bt_j bt_i
  +
  \binom{bt_{(n-i)/2}{+}1}{2} bt_i 
  \Bigg].
  \end{align*}
The terms with binomial coefficients occur only when defined: the first for $n$ even and the second for $n{-}i$ even.
\end{lemma}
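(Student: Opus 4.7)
The plan is to decompose each association type according to the operation at its root and then apply the identities \eqref{LY1} and \eqref{LY2} to count inequivalent arrangements of subtrees.

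First I would identify an association type of degree $n$ with a rooted tree having $n$ leaves in which each internal vertex has either 2 or 3 children, regarded modulo the symmetries imposed by \eqref{LY1} and \eqref{LY2}. Deleting the root yields either two or three subtrees of strictly smaller degree, so $bt_n$ splits as the sum of a binary-root contribution and a ternary-root contribution.

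For the binary root, identity \eqref{LY1} forces anticommutativity, so swapping the two children of the root produces an equivalent association type; the two subtrees thus form an unordered pair. Writing their sizes as $n-i$ and $i$ with $i \le n-i$, the subcase $i < n-i$ runs over $1 \le i \le \lfloor (n-1)/2 \rfloor$ and contributes $bt_{n-i}\,bt_i$ types, while the subcase $i = n-i = n/2$ (present only when $n$ is even) contributes $\binom{bt_{n/2}+1}{2}$, the multiset coefficient counting unordered pairs with possible repetition from $bt_{n/2}$ objects.

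The ternary case is handled analogously. Identity \eqref{LY2} makes $\langle -,-,- \rangle$ antisymmetric in its first two slots, so the first two children form an unordered pair while the third child remains distinguished. Fixing the size $i$ of the third subtree, where $1 \le i \le n-2$, the remaining antisymmetric pair has total size $n-i$ and is counted by exactly the same argument as in the binary case; multiplying by the $bt_i$ choices for the third subtree and summing over $i$ yields the final two terms. Assembling the binary and ternary contributions produces the stated recurrence. The only mildly subtle point is recognizing the multiset coefficient $\binom{k+1}{2}$ for unordered pairs with repetition from $k$ objects, together with the convention that each binomial term contributes only when its half-degree is an integer; once this is noted the argument is routine bookkeeping and no substantial obstacle remains.
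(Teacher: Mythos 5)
Your argument is correct and is exactly the decomposition the paper has in mind: the paper gives no formal proof, only the remark that the quadratic (resp.\ cubic) terms correspond to a bilinear (resp.\ trilinear) root operation, and your root-decomposition with unordered pairs (via the multiset coefficient $\binom{k+1}{2}$) for the anticommutative binary slots and the skew pair of first two ternary slots fills in precisely that bookkeeping. Nothing further is needed.
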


In this recurrence relation, the quadratic (resp.~cubic) terms correspond to applications of the bilinear 
(resp.~trilinear) product.
The $bt_n$ association types are partitioned into three subtypes: $b_n$ binary types involve only the bilinear product, 
$t_n$ ternary types involve only the trilinear product, and the remaining $m_n$ mixed types involve both.
We have $bt_n = b_n + t_n + m_n$ for $n \ge 2$.

\medskip

\begin{center}
\begin{tabular}{l|rrrrrrrrrrrr}
$n$ & 1 & 2 & 3 & 4 & 5 & 6 & 7 & 8 & 9 & 10 & 11 & 12 \\
$bt_n$ & 1 & 1 & 2 & 5 & 13 & 38 & 113 & 354 & 1128 & 3688 & 12229 & 41161 \\
\midrule
$b_n$ & 1 & 1 & 1 & 2 & 3 & 6 & 11 & 23 & 46 & 98 & 207 & 451 \\
$t_n$ & 1 & 0 & 1 & 0 & 2 & 0 & 6 & 0 & 19 & 0 & 67 & 0 \\
$m_n$ & 0 & 0 & 0 & 3 & 8 & 32 & 96 & 331 & 1063 & 3590 & 11955 & 40710 \\
\midrule
\end{tabular}
\end{center}

\medskip

We will order the association types in each degree with the ternary and 
mixed types first, and the binary types last. 
Tables \ref{BTtypes5} and \ref{BTtypes6} display the BT types in degrees 5 and 6, 
ordered according to the following definition.

\begin{definition}
If $T$ is a binary-ternary association type then we set $\alpha(T) = \{2\}$ (resp.~$\alpha(T) = 3$) if $T$ involves only 
the binary (resp.~ternary) operation, and $\alpha(T) = \{2,3\}$ if $T$ involves both operations.
We impose the order $\{3\} \prec \{2,3\} \prec \{2\}$. 
If $T = [ U_1, U_2 ]$ (resp.~$T = \langle U_1, U_2, U_3 \rangle$) then we set $\omega(T) = 2$ (resp.~$\omega(T) = 3$).
If $T$ and $T'$ are binary-ternary association types of degrees $n$ and $n'$ then we say that 
$T \prec T'$ in \textbf{deglex order} (degree-lexicographical order) if:
  \begin{itemize}
  \item
  $n < n'$, or
  \item
  $n = n'$ and $\alpha(T) \prec \alpha(T')$, or
  \item
  $n = n'$, $\alpha(T) = \alpha(T')$ and $\omega(T) < \omega(T')$, or
  \item
  $n = n'$, $\alpha(T) = \alpha(T')$, $\omega(T) = \omega(T')$ and $U_i \prec U'_i$, 
  where $T = [U_1,U_2]$, $T' = [U_1,U'_2]$ (or $T = \langle U_1,U_2,U_3 \rangle$, $T' = \langle U'_1,U'_2,U'_3 \rangle$)
  and $i$ is the least index for which $U_i \ne U'_i$.
  \end{itemize}
\end{definition}

  \begin{table}
  \[
  \begin{array}{llll} 
  { \langle - - \langle - - - \rangle \rangle  } &\quad 
  { \langle \langle - - - \rangle - - \rangle  } \\ 
  \midrule
  { [ \langle - - - \rangle [ - - ] ]  } &\quad 
  { [ [ \langle - - - \rangle - ] - ]  } &\quad 
  { [ \langle - - [ - - ] \rangle - ]  } &\quad 
  { [ \langle [ - - ] - - \rangle - ]  } \\ 
  { \langle - - [ [ - - ] - ] \rangle  } &\quad 
  { \langle [ - - ] - [ - - ] \rangle  } &\quad 
  { \langle [ - - ] [ - - ] - \rangle  } &\quad 
  { \langle [ [ - - ] - ] - - \rangle  } \\ 
  \midrule
  { [ [ [ - - ] - ] [ - - ] ]  } &\quad 
  { [ [ [ - - ] [ - - ] ] - ]  } &\quad 
  { [ [ [ [ - - ] - ] - ] - ]  } 
  \end{array}
  \]
  \caption{Binary-ternary association types in degree 5}
  \label{BTtypes5}
  \end{table}
  
  \begin{table}
  \[
  \begin{array}{llll}   
  { [ \langle - - - \rangle \langle - - - \rangle ]  } &\;\; 
  { [ \langle - - - \rangle [ [ - - ] - ] ]  } &\;\; 
  { [ [ \langle - - - \rangle - ] [ - - ] ]  } &\;\; 
  { [ \langle - - [ - - ] \rangle [ - - ] ]  } \\ 
  { [ \langle [ - - ] - - \rangle [ - - ] ]  } &\;\; 
  { [ \langle - - \langle - - - \rangle \rangle - ]  } &\;\; 
  { [ \langle \langle - - - \rangle - - \rangle - ]  } &\;\; 
  { [ [ \langle - - - \rangle [ - - ] ] - ]  } \\ 
  { [ [ [ \langle - - - \rangle - ] - ] - ]  } &\;\; 
  { [ [ \langle - - [ - - ] \rangle - ] - ]  } &\;\; 
  { [ [ \langle [ - - ] - - \rangle - ] - ]  } &\;\; 
  { [ \langle - - [ [ - - ] - ] \rangle - ]  } \\ 
  { [ \langle [ - - ] - [ - - ] \rangle - ]  } &\;\; 
  { [ \langle [ - - ] [ - - ] - \rangle - ]  } &\;\; 
  { [ \langle [ [ - - ] - ] - - \rangle - ]  } &\;\; 
  { \langle - - [ \langle - - - \rangle - ] \rangle  } \\ 
  { \langle - - \langle - - [ - - ] \rangle \rangle  } &\;\; 
  { \langle - - \langle [ - - ] - - \rangle \rangle  } &\;\; 
  { \langle - - [ [ - - ] [ - - ] ] \rangle  } &\;\; 
  { \langle - - [ [ [ - - ] - ] - ] \rangle  } \\ 
  { \langle [ - - ] - \langle - - - \rangle \rangle  } &\;\; 
  { \langle [ - - ] - [ [ - - ] - ] \rangle  } &\;\; 
  { \langle [ - - ] [ - - ] [ - - ] \rangle  } &\;\; 
  { \langle \langle - - - \rangle - [ - - ] \rangle  } \\ 
  { \langle \langle - - - \rangle [ - - ] - \rangle  } &\;\; 
  { \langle [ [ - - ] - ] - [ - - ] \rangle  } &\;\; 
  { \langle [ [ - - ] - ] [ - - ] - \rangle  } &\;\; 
  { \langle [ \langle - - - \rangle - ] - - \rangle  } \\ 
  { \langle \langle - - [ - - ] \rangle - - \rangle  } &\;\; 
  { \langle \langle [ - - ] - - \rangle - - \rangle  } &\;\; 
  { \langle [ [ - - ] [ - - ] ] - - \rangle  } &\;\; 
  { \langle [ [ [ - - ] - ] - ] - - \rangle  } \\ 
  \midrule
  { [ [ [ - - ] - ] [ [ - - ] - ] ]  } &\;\; 
  { [ [ [ - - ] [ - - ] ] [ - - ] ]  } &\;\; 
  { [ [ [ [ - - ] - ] - ] [ - - ] ]  } &\;\; 
  { [ [ [ [ - - ] - ] [ - - ] ] - ]  } \\ 
  { [ [ [ [ - - ] [ - - ] ] - ] - ]  } &\;\; 
  { [ [ [ [ [ - - ] - ] - ] - ] - ]  } &\;\; 
  \end{array}
  \]
  \caption{Binary-ternary association types in degree 6}
  \label{BTtypes6}
  \end{table}
 
\begin{definition}
Let $T$ be a binary-ternary association type in degree $n$.
Let $\iota \in S_n$ be the identity permutation, and consider $\sigma \in S_n$ with $\sigma \ne \iota$ but
$\sigma^2 = \iota$.
Let $[\iota]_T + [\sigma]_T$ be the multilinear polynomial in $BT(n)$ whose two terms have type $T$
with variables $x_1 \cdots x_n$ and $x_{\sigma(1)} \cdots x_{\sigma(n)}$ respectively.
If $[\iota]_T + [\sigma]_T \equiv 0$ in $BT(n)$ then we call this identity a \textbf{skew-symmetry} of association
type $T$.
We write $s(T)$ for the number of skew-symmetrices of association type $T$.
\end{definition}

We will only require the skew-symmetries of the binary association types in each degree.
In degree 8 the 23 binary types are displayed in Table \ref{binary8} with their index numbers as subscripts.
The corresponding 74 skew-symmetries are displayed in Table \ref{skew8}; we give only the index number $t$,
together with $\sigma$ expressed as a product of disjoint transpositions and its sign.

\begin{definition} \label{definitionlifting}
Let $f = f( a_1, \dots, a_n )$ be an element of $BT(n)$: a multilinear polynomial of degree $n$ in the free BT-algebra on $n$
generators.
The bilinear operation produces $n{+}1$ consequences of $f$ in degree $n{+}1$, called the \textbf{binary liftings} of $f$,
using $n$ substitutions and one multiplication:
  \begin{align*}
  &
  f( [a_1,a_{n+1}], a_2, \dots, a_n ), 
  \qquad
  \dots,
  \qquad
  f( a_1, \dots, a_{n-1}, [a_n,a_{n+1}] ),
  \\
  &
  [ f( a_1, \dots, a_n ), a_{n+1} ].
  \end{align*}
The trilinear operation produces $n{+}2$ consequences of $f$ in degree $n{+}2$, called the \textbf{ternary liftings} of $f$, 
using $n$ substitutions and two multiplications:
  \begin{align*}
  &
  f( \langle a_1, a_{n+1}, a_{n+2} \rangle, a_2, \dots, a_n ), 
  \qquad
  \dots,
  \qquad
  f( a_1, \dots, a_{n-1}, \langle a_n, a_{n+1}, a_{n+2} \rangle ),
  \\
  &
  \langle f( a_1, \dots, a_n ), a_{n+1}, a_{n+2} \rangle,
  \qquad
  \langle a_{n+1}, a_{n+2}, f( a_1, \dots, a_n ) \rangle.
  \end{align*}
\end{definition}

  \begin{table}
  \[
  \begin{array}{lll}    
  { [ [ [ - - ] [ - - ] ] [ [ - - ] [ - - ] ] ]_1 } &\;  
  { [ [ [ - - ] [ - - ] ] [ [ [ - - ] - ] - ] ]_2 } &\;  
  { [ [ [ [ - - ] - ] - ] [ [ [ - - ] - ] - ] ]_3 } \\ 
  { [ [ [ [ - - ] - ] [ - - ] ] [ [ - - ] - ] ]_4 } &\;  
  { [ [ [ [ - - ] [ - - ] ] - ] [ [ - - ] - ] ]_5 } &\;  
  { [ [ [ [ [ - - ] - ] - ] - ] [ [ - - ] - ] ]_6 } \\ 
  { [ [ [ [ - - ] - ] [ [ - - ] - ] ] [ - - ] ]_7 } &\;  
  { [ [ [ [ - - ] [ - - ] ] [ - - ] ] [ - - ] ]_8 } &\;  
  { [ [ [ [ [ - - ] - ] - ] [ - - ] ] [ - - ] ]_9 } \\ 
  { [ [ [ [ [ - - ] - ] [ - - ] ] - ] [ - - ] ]_{10} } &\;  
  { [ [ [ [ [ - - ] [ - - ] ] - ] - ] [ - - ] ]_{11} } &\;  
  { [ [ [ [ [ [ - - ] - ] - ] - ] - ] [ - - ] ]_{12} } \\ 
  { [ [ [ [ - - ] [ - - ] ] [ [ - - ] - ] ] - ]_{13} } &\;  
  { [ [ [ [ [ - - ] - ] - ] [ [ - - ] - ] ] - ]_{14} } &\;  
  { [ [ [ [ [ - - ] - ] [ - - ] ] [ - - ] ] - ]_{15} } \\ 
  { [ [ [ [ [ - - ] [ - - ] ] - ] [ - - ] ] - ]_{16} } &\;  
  { [ [ [ [ [ [ - - ] - ] - ] - ] [ - - ] ] - ]_{17} } &\;  
  { [ [ [ [ [ - - ] - ] [ [ - - ] - ] ] - ] - ]_{18} } \\ 
  { [ [ [ [ [ - - ] [ - - ] ] [ - - ] ] - ] - ]_{19} } &\;  
  { [ [ [ [ [ [ - - ] - ] - ] [ - - ] ] - ] - ]_{20} } &\;  
  { [ [ [ [ [ [ - - ] - ] [ - - ] ] - ] - ] - ]_{21} } \\  
  { [ [ [ [ [ [ - - ] [ - - ] ] - ] - ] - ] - ]_{22} } &\;  
  { [ [ [ [ [ [ [ - - ] - ] - ] - ] - ] - ] - ]_{23} }    
  \end{array}
  \]
  \caption{Binary association types in degree 8}
  \label{binary8}
  \[
  \begin{array}{rlr||rlr||rlr} 
  t & \sigma & \varepsilon(\sigma) &
  t & \sigma & \varepsilon(\sigma) &
  t & \sigma & \varepsilon(\sigma) \\
  \midrule
   1  &(12)              &  -1  &   
   1  &(34)              &  -1  &   
   1  &(13)(24)          &   1  \\  
   1  &(56)              &  -1  &   
   1  &(78)              &  -1  &   
   1  &(57)(68)          &   1  \\  
   1  &(15)(26)(37)(48)  &   1  &   
   2  &(12)              &  -1  &   
   2  &(34)              &  -1  \\  
   2  &(13)(24)          &   1  &   
   2  &(56)              &  -1  &   
   3  &(12)              &  -1  \\  
   3  &(56)              &  -1  &   
   3  &(15)(26)(37)(48)  &   1  &   
   4  &(12)              &  -1  \\  
   4  &(45)              &  -1  &   
   4  &(67)              &  -1  &   
   5  &(12)              &  -1  \\  
   5  &(34)              &  -1  &   
   5  &(13)(24)          &   1  &   
   5  &(67)              &  -1  \\  
   6  &(12)              &  -1  &   
   6  &(67)              &  -1  &   
   7  &(12)              &  -1  \\  
   7  &(45)              &  -1  &   
   7  &(14)(25)(36)      &  -1  &   
   7  &(78)              &  -1  \\  
   8  &(12)              &  -1  &   
   8  &(34)              &  -1  &   
   8  &(13)(24)          &   1  \\  
   8  &(56)              &  -1  &   
   8  &(78)              &  -1  &   
   9  &(12)              &  -1  \\  
   9  &(56)              &  -1  &   
   9  &(78)              &  -1  &   
  10  &(12)              &  -1  \\  
  10  &(45)              &  -1  &   
  10  &(78)              &  -1  &   
  11  &(12)              &  -1  \\  
  11  &(34)              &  -1  &   
  11  &(13)(24)          &   1  &   
  11  &(78)              &  -1  \\  
  12  &(12)              &  -1  &   
  12  &(78)              &  -1  &   
  13  &(12)              &  -1  \\  
  13  &(34)              &  -1  &   
  13  &(13)(24)          &   1  &   
  13  &(56)              &  -1  \\  
  14  &(12)              &  -1  &   
  14  &(56)              &  -1  &   
  15  &(12)              &  -1  \\  
  15  &(45)              &  -1  &   
  15  &(67)              &  -1  &   
  16  &(12)              &  -1  \\  
  16  &(34)              &  -1  &   
  16  &(13)(24)          &   1  &   
  16  &(67)              &  -1  \\  
  17  &(12)              &  -1  &   
  17  &(67)              &  -1  &   
  18  &(12)              &  -1  \\  
  18  &(45)              &  -1  &   
  18  &(14)(25)(36)      &  -1  &   
  19  &(12)              &  -1  \\  
  19  &(34)              &  -1  &   
  19  &(13)(24)          &   1  &   
  19  &(56)              &  -1  \\  
  20  &(12)              &  -1  &   
  20  &(56)              &  -1  &   
  21  &(12)              &  -1  \\  
  21  &(45)              &  -1  &   
  22  &(12)              &  -1  &   
  22  &(34)              &  -1  \\  
  22  &(13)(24)          &   1  &   
  23  &(12)              &  -1  &   
  \end{array}
  \]
  \caption{Skew-symmetries of binary types in degree 8}
  \label{skew8}
  \end{table}

\smallskip

We apply this lifting process to identities \eqref{LY3}-\eqref{LY6}, which we write as follows:
  \begin{align*}
  f( a, b, c ) 
  &= 
  [[a,b],c] - [[a,c],b] + [[b,c],a] + \langle a, b, c \rangle - \langle a, c, b \rangle + \langle b, c, a \rangle,
  \\
  g_1( a, b, c, d ) 
  &=
  \langle [a,b], c, d \rangle - \langle [a,c], b, d \rangle + \langle [b,c], a, d \rangle,
  \\
  g_2( a, b, c, d )
  &=
  \langle a, b, [c,d] \rangle - [ \langle a, b, c \rangle, d ] + [ \langle a, b, d \rangle, c ],
  \\
  h( a, b, c, d, e )
  &=
  \langle a, b, \langle c, d, e \rangle \rangle 
  - \langle \langle a, b, c \rangle, d, e \rangle 
  + \langle \langle a, b, d \rangle, c, e \rangle 
  - \langle c, d, \langle a, b, e \rangle \rangle.
  \end{align*}
In degree 3, we have only one identity:
  \[
  f( a, b, c ) \equiv 0.
  \]
In degree 4, we have $g_1$, $g_2$ and four consequences of $f$ using $[-,-]$:
  \begin{alignat*}{3}
  &
  g_1( a, b, c, d ) \equiv 0,
  &\qquad &
  g_2( a, b, c, d ) \equiv 0,
  &\qquad &
  f( [a,d], b, c ) \equiv 0,
  \\
  &
  f( a, [b,d], c ) \equiv 0,
  &\qquad &
  f( a, b, [c,d] ) \equiv 0,
  &\qquad &
  [ f( a, b, c ), d ] \equiv 0.
  \end{alignat*}
In degree 5, we have $h$, five consequences of each identity in degree 4 using $[-,-]$, and five consequences of $f$ using
$\langle -, -, - \rangle$; see Table \ref{lifting5}, where we omit ``$\equiv 0$'' to save space.
These 36 identities generate the $S_5$-module of identities in degree 5 satisfied by Lie-Yamaguti algebras.
Some of these identities are redundant, but this issue will not concern us until degree 6.
For example, we do not need to include both 
  \[
  f([[a,e],d],b,c), 
  \qquad
  f([a,[d,e]],b,c) = -f([[d,e],a],b,c),
  \]
in our set of generators, since they differ only by the transposition $(ad)$ and a sign.

  \begin{table}
  \[
  \begin{array}{llll}
  { h( a, b, c, d, e ), }
  &\quad
  { g_1( [a,e], b, c, d ), } 
  &\quad
  { g_1( a, [b,e], c, d ), } 
  &\quad
  { g_1( a, b, [c,e], d ), } 
  \\
  { g_1( a, b, c, [d,e] ), } 
  &\quad
  { [ g_1( a, b, c, d ), e ], }
  &\quad
  { g_2( [a,e], b, c, d ), } 
  &\quad
  { g_2( a, [b,e], c, d ), } 
  \\
  { g_2( a, b, [c,e], d ), } 
  &\quad
  { g_2( a, b, c, [d,e] ), } 
  &\quad
  { [ g_2( a, b, c, d ), e ], }
  &\quad
  { f( [[a,e],d], b, c ), } 
  \\
  { f( [a,[d,e]], b, c ), } 
  &\quad
  { f( [a,d], [b,e], c ), } 
  &\quad
  { f( [a,d], b, [c,e] ), } 
  &\quad
  { [ f( [a,d], b, c ), e ], }
  \\
  { f( [a,e], [b,d], c ), }
  &\quad
  { f( a, [[b,e],d], c ), }
  &\quad
  { f( a, [b,[d,e]], c ), }
  &\quad
  { f( a, [b,d], [c,e] ), }
  \\
  { [ f( a, [b,d], c ), e ], }
  &\quad  
  { f( [a,e], b, [c,d] ), } 
  &\quad
  { f( a, [b,e], [c,d] ), } 
  &\quad
  { f( a, b, [[c,e],d] ), } 
  \\
  { f( a, b, [c,[d,e]] ), } 
  &\quad
  { [ f( a, b, [c,d] ), e ], }
  &\quad
  { [ f( [a,e], b, c ), d ], } 
  &\quad
  { [ f( a, [b,e], c ), d ], } 
  \\
  { [ f( a, b, [c,e] ), d ], } 
  &\quad
  { [ f( a, b, c ), [d,e] ], } 
  &\quad
  { [ [ f( a, b, c ), d ], e ], }
  &\quad
  { f( \langle a, d, e \rangle, b, c ), } 
  \\
  { f( a, \langle b, d, e \rangle, c ), } 
  &\quad
  { f( a, b, \langle c, d, e \rangle ), } 
  &\quad
  { \langle f( a, b, c ), d, e \rangle, } 
  &\quad
  { \langle d, e, f( a, b, c ) \rangle. }
  \end{array}
  \]
  \smallskip
  \caption{Lie-Yamaguti identities in degree 5}
  \label{lifting5}
  \end{table}
  
\begin{definition}
We write $LY(n) \subset BT(n)$ for the $S_n$-submodule of all multilinear polynomial identities in degree $n$ satisfied by 
Lie-Yamaguti algebras.
\end{definition}

Each of the 36 generators of $LY(5)$ produces 6 identities in degree 6 using $[-,-]$, and each of the six generators of
$LY(4)$ produces 6 identities in degree 6 using $\langle-,-,-\rangle$, giving a total of 252 generators for the $S_6$-module 
$LY(6)$.
Continuing the lifting process, we obtain $( 252 + 36 ) \cdot 7 = 2016$ generators for the $S_7$-module $LY(7)$, and
$( 2016 + 252 ) \cdot 8 = 18144$ generators for the $S_8$-module $LY(8)$.

\begin{lemma}
The number $\lambda(n)$ of multilinear Lie-Yamaguti identities in degree $n$ obtained by the lifting procedure 
described above equals
  \[
  \lambda(n) = \frac{1}{20} (n{+}1)! \quad (n \ge 4).
  \]
\end{lemma}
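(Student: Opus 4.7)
The plan is to argue by induction on $n$, with base cases at $n = 4$ and $n = 5$. Direct enumeration from the preceding discussion gives $\lambda(4) = 6$ (namely $g_1$, $g_2$, and the four binary liftings of $f$) and $\lambda(5) = 36$ (the $30$ binary liftings of the six degree-$4$ generators, the $5$ ternary liftings of $f$, and $h$ itself, as displayed in Table~\ref{lifting5}); these match $5!/20$ and $6!/20$.

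For the inductive step I would first establish the recurrence
\[
\lambda(n) = n\bigl(\lambda(n-1) + \lambda(n-2)\bigr) \quad (n \ge 6).
\]
The key observation is that no defining identity of a Lie-Yamaguti algebra first appears in degree $\ge 6$, so every identity produced by the lifting procedure in degree $n \ge 6$ is either a binary lifting of an identity in degree $n-1$ or a ternary lifting of an identity in degree $n-2$. By Definition~\ref{definitionlifting}, each multilinear identity of degree $n-1$ contributes exactly $(n-1)+1 = n$ binary liftings in degree $n$, and each multilinear identity of degree $n-2$ contributes exactly $(n-2)+2 = n$ ternary liftings in degree $n$, which accounts for every generator and yields the stated recurrence. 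Substituting the inductive hypotheses $\lambda(n-1) = n!/20$ and $\lambda(n-2) = (n-1)!/20$ then gives
\[
\lambda(n) = \frac{n\bigl(n! + (n-1)!\bigr)}{20} = \frac{n \cdot (n-1)! \cdot (n+1)}{20} = \frac{(n+1)!}{20},
\]
closing the induction.

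There is no serious obstacle. The only point requiring care is to remember that $\lambda(n)$ counts the generators produced by the lifting procedure, \emph{not} a basis for the module $LY(n)$; the redundancies flagged in the text (e.g.\ that $f([[a,e],d],b,c)$ and $f([a,[d,e]],b,c)$ differ only by a transposition and a sign) do not affect this count. The clean two-term recurrence begins at $n = 6$ precisely because the ``newborn'' defining identity $h$ in degree $5$ must be added separately to the base case $n = 5$; for $n \le 5$ the bookkeeping of genuinely new defining identities ($f$ in degree~$3$; $g_1$ and $g_2$ in degree~$4$; $h$ in degree~$5$) interferes with the pure lifting recurrence.
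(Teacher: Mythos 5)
Your proof is correct and follows essentially the same route as the paper: the paper's proof simply records the base values $\lambda(4)=6$, $\lambda(5)=36$ and the recurrence $\lambda(n)=n\bigl(\lambda(n-1)+\lambda(n-2)\bigr)$, then notes it has the stated solution, which is exactly your argument (you merely spell out why each degree-$(n-1)$ identity yields $n$ binary liftings and each degree-$(n-2)$ identity yields $n$ ternary liftings).
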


\begin{proof}
The sequence $\lambda(n)$ satisfies $\lambda(4) = 6$, $\lambda(5) = 36$, and the recurrence relation
$\lambda(n) = n \big( \lambda(n{-}1) + \lambda(n{-}2 \big)$, which has the indicated solution.
\end{proof}

We will discuss in the next section how to eliminate redundancies in these sets of Lie-Yamaguti identities 
in order to reduce the size of the computations.

\begin{lemma}
The number of multilinear monomials forming a basis of $BT(n)$ is
  \[
  \mu(n) = \sum_{i=1}^m \frac{n!}{2^{s(T_i)}},
  \]
where $T_1, \dots, T_m$ is a complete list of the association types in degree $n$.
\end{lemma}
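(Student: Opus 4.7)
The plan is to count $\dim BT(n)$ one association type at a time. Every multilinear monomial of degree $n$ is determined by a pair consisting of an association type $T_i$ and a bijection between the $n$ leaves of $T_i$ and the variables $x_1,\dots,x_n$, so the multilinear space decomposes as a vector space direct sum $BT(n) = \bigoplus_{i=1}^{m} BT(n)_{T_i}$, and it suffices to prove $\dim BT(n)_{T_i} = n!/2^{s(T_i)}$ for each type.

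For a fixed type $T$, the $n!$ labelings of $T$'s leaves span a copy of the regular representation of $S_n$, and I would first identify the relations coming from \eqref{LY1} and \eqref{LY2} that stay within type $T$. Applying the anticommutativity identity \eqref{LY1}, or the linearization $\langle a,b,c\rangle \equiv -\langle b,a,c\rangle$ of \eqref{LY2}, at a single internal node of $T$ exchanges the two subtrees being compared; the result lies in the same type $T$ precisely when those subtrees have identical shape, and in that case the identity is exactly a skew-symmetry $[\iota]_T + [\sigma]_T \equiv 0$. Hence $BT(n)_T$ is the quotient of the regular representation by the $S_n$-submodule generated by the $s(T)$ skew-symmetries, one for each swappable internal node of $T$.

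Next I would study the subgroup $G_T \le S_n$ generated by the involutions appearing in the skew-symmetries. Regarded as automorphisms of the rooted shape-tree underlying $T$, the generators assemble recursively into an iterated wreath product of copies of $\mathbb{Z}/2$, with one factor for each swappable internal node; an induction on the depth of $T$ then gives $|G_T| = 2^{s(T)}$. Since $G_T$ acts freely on $S_n$ by right translation, the $n!$ labelings partition into $n!/2^{s(T)}$ orbits of size $2^{s(T)}$, and within each orbit the skew-symmetry relations identify all labelings up to a sign determined by a character $\chi\colon G_T \to \{\pm 1\}$ that sends every generator to $-1$.

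The main obstacle is to verify that $\chi$ is a well-defined character: a priori some product of generating skew-symmetries could evaluate to the identity permutation while carrying an odd number of $-1$'s, which would force an entire orbit to collapse to zero and lower the dimension below $n!/2^{s(T)}$. I would settle this by induction along the wreath-product decomposition: the characteristic relations of $\mathbb{Z}/2 \wr H$ have the form $a g a^{-1} = g'$, with $g, g'$ mirror generators of the two copies of the base, so the top-level assignment $\chi(a) = -1$ is automatically consistent with $\chi(g) = \chi(g') = -1$ inherited from the base. Once $\chi$ is known to exist, each of the $n!/2^{s(T)}$ orbits contributes exactly one basis vector to $BT(n)_T$, and summing over all types yields $\mu(n) = \sum_{i=1}^m n!/2^{s(T_i)}$.
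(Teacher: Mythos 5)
Your argument is correct and follows the same route as the paper: decompose $BT(n)$ by association type and show each type $T$ contributes $n!/2^{s(T)}$ monomials, then sum over types. The paper's proof simply asserts this per-type count in one sentence, whereas you additionally supply the justification it leaves implicit (the shape-symmetry group generated by the node swaps has order $2^{s(T)}$ and acts freely, and the sign character sending each swap to $-1$ is consistent), so this is a fleshed-out version of the same proof rather than a different approach.
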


\begin{proof}
If $T$ is an association type in degree $n$ with $s(T)$ skew-symmetries, 
then the number of distinct multilinear monomials with type $T$ is $n!/2^{s(T)}$.
\end{proof}

\subsection*{The matrix of Lie-Yamaguti identities}

In principle, for each $n$ we construct a matrix of size $\lambda(n) \times \mu(n)$
in which the $(i,j)$ entry is the coefficient of the $j$-th multilinear monomial in the $i$-th Lie-Yamaguti identity.
We then compute the row canonical form (RCF) of this matrix, and identify those rows whose leading 1s occur
in a column labelled by a monomial in a binary association type.
(Recall that we have sorted the association types so that the binary types correspond to the rightmost columns
of the matrix.
This idea of using a convenient ordering of the association types was introduced by Correa, Hentzel and Peresi
\cite{CHP}.)
Let $b(n)$ be the number of columns corresponding to the binary types, and let $a(n)$ be the number of rows of the RCF
whose leading 1s occur in the last $b(n)$ columns.
Since we have already eliminated all skew-symmetries of the association types in our enumeration of the multilinear 
monomials, it follows that if $a(n) > 0$ then the corresponding rows are polynomial identities satisfied by the bilinear
operation in every Lie-Yamaguti algebra which are not consequences of identities \eqref{LY1} and \eqref{LY2}.

\subsection*{Representation theory of the symmetric group}

We have
  \[ 
  \mu(5) = 300, \qquad
  \mu(6) = 5310, \qquad
  \mu(7) = 109620, \qquad
  \mu(8) = 2751840.
  \]
In order to reduce the size of the matrices, we use the representation theory of the symmetric group.
A theoretical and algorithmic exposition of this method has been given by Bremner and Peresi \cite{BremnerPeresi};
here we summarize the basic ideas.

The irreducible representations of $S_n$ are in one-to-one correspondence with the partitions $\pi$ of $n$.
For $\pi = (n_1,\dots,n_k)$, $n_1 \ge \cdots \ge n_k \ge 1$, $n_1 + \cdots + n_k = n$, we write $d_\pi$
for the dimension of the corresponding representation.
Over any field $F$ of characteristic 0 or $p > n$, the group algebra $FS_n$ is semisimple and has the following
decomposition into the direct sum of simple two-sided ideals, where $M_d(F)$ denotes the $d \times d$ matrices
over $F$:
  \[
  F S_n \cong \bigoplus_\pi M_{d_\pi}(F).
  \]
We write $R_\pi\colon F S_n \to M_{d_\pi}(F)$ for the projection corresponding to $\pi$.
The matrices $R_\pi(\sigma)$ for $\sigma \in S_n$ can be computed using the method of Clifton \cite{Clifton}.
Let $T_1, \dots, T_m$ be the BT association types in degree $n$, and
let $I_1, \dots, I_\ell$ be the Lie-Yamaguti identities in degree $n$.
We collect the terms of each identity by association type:
  \[
  I_i = \sum_{j=1}^m I_{ij},
  \]
where $I_{ij} \in F S_n$ contains the terms of the $i$-th identity with the $j$-th type.
Thus each identity can be written as an element of the direct sum of $m$ copies of $F S_n$.

For each partition $\pi$, we write $L_\pi$ for the $\ell d_\pi \times m d_\pi$ matrix consisting of $d_\pi \times d_\pi$ 
blocks in which the $(i,j)$ block contains the representation matrix $R_\pi(I_{ij})$.
We compute $\mathrm{RCF}(L_\pi)$ and extract the $a_\pi \times b_n d_\pi$ submatrix $A_\pi$
containing the nonzero rows whose leading 1s occur in one of the $d_\pi \times d_\pi$
blocks corresponding to the $b_n$ binary types.
This submatrix represents the consequences of the Lie-Yamaguti identities for partition $\pi$ which involve only the
bilinear operation:
\smallskip
\[
\mathrm{RCF}( L_\pi ) 
=
\begin{array}{cc}
\overbrace{\qquad\qquad\qquad\qquad\qquad}^{\text{ternary and mixed BT types}} 
&\!\!\!\!\!\!
\overbrace{\qquad\qquad}^{\text{binary types}}
\\
\left[ 
\begin{array}{c|}
\qquad \ast \quad \cdots\cdots \quad \ast \qquad
\\
\midrule
\qquad 0 \quad \cdots\cdots \quad 0 \qquad 
\end{array}
\right.
&\!\!\!\!\!\!
\left.
\begin{array}{c}
\ast \; \cdots \; \ast \;\;
\\
\midrule
A_\pi
\end{array}
\right]
\end{array}
\]
\smallskip

We need to exclude the identities which are consequences of the skew-symmetries of the binary types.
We construct the $s_n d_\pi \times b_n d_\pi$ matrix $\overline{B}_\pi$ in which the $(i,j)$ block contains the 
matrix $R_\pi( [\iota]_j + [\sigma_i]_j )$ representing the terms of the $i$-th skew-symmetry in the $j$-th binary type.
We compute $\mathrm{RCF}(\overline{B}_\pi)$ and write $B_\pi$ for the $c_\pi \times b_n d_\pi$
submatrix containing the nonzero rows.

If the row space of $A_\pi$ is a subspace of the row space of $B_\pi$, then for partition $\pi$ we see that every identity
for the bilinear operation is a consequence of the skew-symmetries of the binary types.
If the row space of $A_\pi$ is not a subspace of the row space of $B_\pi$, then each row of $A_\pi$ which does not belong
to the row space of $B_\pi$ represents an identity for the bilinear operation which is not a consequence of the 
skew-symmetries of the binary types. 

If computer memory permits, we do these calculations using rational arithmetic.
However, this becomes impractical except in low degrees.
Therefore, we use modular arithmetic with a prime $p$ greater than the degree $n$ of the identities.
By the structure theory of the group algebra $F S_n$, the ranks of the matrices will be the same as they would have been
if we had used rational arithmetic.


\section{Main Theorem}

We first show that there are no identities of the required form in degrees 6 and~7, and we then study degree 8,
where we find an identity for the bilinear operation.

In degree 6, there are 38 binary-ternary association types, and 252 Lie-Yamaguti identities.
For each partition $\pi$, we construct the $252 d_\pi \times 38 d_\pi$ matrix in which the $(i,j)$ block contains
the representation matrix for the terms with the $j$-th association type in the $i$-th Lie-Yamaguti identity.
We compute the RCF of this matrix and extract the lower right block containing the binary identities.

There are 6 binary types with 15 skew-symmetries.
We construct the $15 d_\pi \times 6 d_\pi$ matrix in which the $(i,j)$ block contains the representation
matrix for the terms with the $j$-th binary type in the $i$-th skew-symmetry, and compute its RCF.

For every partition $\pi$, we find that the row space of the lower right block of the first matrix is a subspace 
of the row space of the second matrix.
Thus every identity in degree 6 satisfied by the bilinear operation is a
consequence of the skew-symmetries of the binary types.

We repeat the same computation, but we reduce the matrix after each Lie-Yamaguti identity in order to 
keep track of those identities which cause the rank to increase for at least one partition.
We find that the 252 identities in degree 6 are consequences of a subset of 48 identities: 
more than 80\% are redundant. 

In degree 7, there are 113 binary-ternary association types, and 2016 Lie-Yamaguti identities.
There are 11 binary types with 30 skew-symmetries.
Following the same procedure as in degree 6, we find that every identity in degree 7 satisfied by the bilinear
operation is a consequence of the skew-symmetries of the binary types.
Furthermore, the 2016 identities in degree 7 are consequences of a subset of 154 identities:
more than 92\% are redundant. 

  \begin{table}
  \[
  \left[
  \begin{array}{rrrrrrrrrrrrrrrrrrrrrrr}
  1 & \cdot & \cdot & \cdot & \cdot & \cdot &\nn \cdot & \cdot &\nn \cdot & \cdot & \cdot & \cdot & \cdot & \cdot & \cdot & \cdot & \cdot & \cdot & \cdot & \cdot &\nn \cdot & \cdot & \cdot \\
  \cdot & 1 & \cdot & \cdot & \cdot & \cdot &\nn \cdot & \cdot &\nn \cdot & \cdot & \cdot & \cdot & \cdot & \cdot & \cdot & \cdot & \cdot & \cdot & \cdot & \cdot &\nn \cdot & \cdot & \cdot \\
  \cdot & \cdot & 1 & \cdot & \cdot & \cdot &\nn \cdot & \cdot &\nn \cdot & \cdot & \cdot & \cdot & \cdot & \cdot & \cdot & \cdot & \cdot & \cdot & \cdot & \cdot &\nn \cdot & \cdot & \cdot \\
  \cdot & \cdot & \cdot & 1 & \cdot & \cdot &\nn -\frac32 & \cdot &\nn -1 & 1 & \cdot & \cdot & \cdot & 2 & \cdot & \cdot & \cdot & 3 & \cdot & 2 &\nn -2 & \cdot & \cdot \\
  \cdot & \cdot & \cdot & \cdot & 1 & \cdot &\nn \cdot & \cdot &\nn \cdot & \cdot & \cdot & \cdot & \cdot & \cdot & \cdot & \cdot & \cdot & \cdot & \cdot & \cdot &\nn \cdot & \cdot & \cdot \\
  \cdot & \cdot & \cdot & \cdot & \cdot & \cdot &\nn \cdot & 1 &\nn \cdot & \cdot & \cdot & \cdot & \cdot & \cdot & \cdot & \cdot & \cdot & \cdot & \cdot & \cdot &\nn \cdot & \cdot & \cdot \\
  \cdot & \cdot & \cdot & \cdot & \cdot & \cdot &\nn \cdot & \cdot &\nn \cdot & \cdot & 1 & \cdot & \cdot & \cdot & \cdot & \cdot & \cdot & \cdot & \cdot & \cdot &\nn \cdot & \cdot & \cdot \\
  \cdot & \cdot & \cdot & \cdot & \cdot & \cdot &\nn \cdot & \cdot &\nn \cdot & \cdot & \cdot & \cdot & 1 & \cdot & \cdot & \cdot & \cdot & \cdot & \cdot & \cdot &\nn \cdot & \cdot & \cdot \\
  \cdot & \cdot & \cdot & \cdot & \cdot & \cdot &\nn \cdot & \cdot &\nn \cdot & \cdot & \cdot & \cdot & \cdot & \cdot & \cdot & 1 & \cdot & \cdot & \cdot & \cdot &\nn \cdot & \cdot & \cdot \\
  \cdot & \cdot & \cdot & \cdot & \cdot & \cdot &\nn \cdot & \cdot &\nn \cdot & \cdot & \cdot & \cdot & \cdot & \cdot & \cdot & \cdot & \cdot & \cdot & 1 & \cdot &\nn \cdot & \cdot & \cdot \\
  \cdot & \cdot & \cdot & \cdot & \cdot & \cdot &\nn \cdot & \cdot &\nn \cdot & \cdot & \cdot & \cdot & \cdot & \cdot & \cdot & \cdot & \cdot & \cdot & \cdot & \cdot &\nn \cdot & 1 & \cdot
  \end{array}
  \right]
  \]
  \caption{RCF of binary Lie-Yamaguti identities for partition $1^8$}
  \label{rcflowerright8}
  \[
  \left[
  \begin{array}{rrrrrrrrrrrrrrrrrrrrrrr}
  1 & \cdot & \cdot & \cdot & \cdot & \cdot & \cdot & \cdot & \cdot & \cdot & \cdot & \cdot & \cdot & \cdot & \cdot & \cdot & \cdot & \cdot & \cdot & \cdot & \cdot & \cdot & \cdot \\
  \cdot & 1 & \cdot & \cdot & \cdot & \cdot & \cdot & \cdot & \cdot & \cdot & \cdot & \cdot & \cdot & \cdot & \cdot & \cdot & \cdot & \cdot & \cdot & \cdot & \cdot & \cdot & \cdot \\
  \cdot & \cdot & 1 & \cdot & \cdot & \cdot & \cdot & \cdot & \cdot & \cdot & \cdot & \cdot & \cdot & \cdot & \cdot & \cdot & \cdot & \cdot & \cdot & \cdot & \cdot & \cdot & \cdot \\
  \cdot & \cdot & \cdot & \cdot & 1 & \cdot & \cdot & \cdot & \cdot & \cdot & \cdot & \cdot & \cdot & \cdot & \cdot & \cdot & \cdot & \cdot & \cdot & \cdot & \cdot & \cdot & \cdot \\
  \cdot & \cdot & \cdot & \cdot & \cdot & \cdot & \cdot & 1 & \cdot & \cdot & \cdot & \cdot & \cdot & \cdot & \cdot & \cdot & \cdot & \cdot & \cdot & \cdot & \cdot & \cdot & \cdot \\
  \cdot & \cdot & \cdot & \cdot & \cdot & \cdot & \cdot & \cdot & \cdot & \cdot & 1 & \cdot & \cdot & \cdot & \cdot & \cdot & \cdot & \cdot & \cdot & \cdot & \cdot & \cdot & \cdot \\
  \cdot & \cdot & \cdot & \cdot & \cdot & \cdot & \cdot & \cdot & \cdot & \cdot & \cdot & \cdot & 1 & \cdot & \cdot & \cdot & \cdot & \cdot & \cdot & \cdot & \cdot & \cdot & \cdot \\
  \cdot & \cdot & \cdot & \cdot & \cdot & \cdot & \cdot & \cdot & \cdot & \cdot & \cdot & \cdot & \cdot & \cdot & \cdot & 1 & \cdot & \cdot & \cdot & \cdot & \cdot & \cdot & \cdot \\
  \cdot & \cdot & \cdot & \cdot & \cdot & \cdot & \cdot & \cdot & \cdot & \cdot & \cdot & \cdot & \cdot & \cdot & \cdot & \cdot & \cdot & \cdot & 1 & \cdot & \cdot & \cdot & \cdot \\
  \cdot & \cdot & \cdot & \cdot & \cdot & \cdot & \cdot & \cdot & \cdot & \cdot & \cdot & \cdot & \cdot & \cdot & \cdot & \cdot & \cdot & \cdot & \cdot & \cdot & \cdot & 1 & \cdot
  \end{array}
  \right]
  \]
  \caption{RCF of binary skew-symmetries for partition $1^8$}
  \label{rcfskew8}
  \end{table}

In degree 8, we first compute the binary liftings of the 154 identities from degree 7 and the ternary liftings of 
the 48 identities from degree 6.
We obtain altogether $( 154 + 48 ) \cdot 8 = 1616$ identities, less than 9\% of the original set of 18144 identities.

Of the 22 irreducible representations of $S_8$, the first 21 contain only identities for the bilinear operation
which are consequences of the skew-symmetries of the binary types.
The last representation (the sign representation of $S_8$) produces an identity 
which is not a consequence of the skew-symmetries. 
For this representation, the corresponding elements of the group algebra $FS_8$ are the alternating sums 
over all permutations of the variables in each association type.
Since this representation is 1-dimensional, the matrices are relatively small, 
and it is possible to do these calculations using rational arithmetic.

The lower right block of the RCF of the matrix representing the Lie-Yamaguti identities consists 
of those rows whose leading 1s occur in a column corresponding to a binary association type;
this matrix has rank 11 and is displayed in Table \ref{rcflowerright8}.

In the RCF of the matrix representing the skew-symmetries of the binary association types, 
the columns which consist entirely of 0 correspond to those types $T$ whose skew-symmetries all have the form 
$[\iota]_T + [\sigma]_T$ where $\sigma$ is an \emph{odd} permutation (see Table \ref{skew8}); 
this matrix has rank 10 and is displayed in Table \ref{rcfskew8}.

Row 4 of the matrix in Table \ref{rcflowerright8} represents an identity satisfied by the bilinear operation 
in every Lie-Yamaguti algebra 
which is not a consequence of anticommutativity.
This identity is stated explicitly in the next theorem.

\begin{theorem}
The following polynomial identity is satisfied by the bilinear operation in every Lie-Yamaguti algebra over a field of 
characteristic 0 or $p > 2$, but is not a consequence of anticommutativity:
\begin{align*}
\sum_{\sigma \in S_8}
\varepsilon(\sigma)
\Big( \,
&
[[[[a,b],c],[d,e]],[[f,g],h]]
-\tfrac32 \,
[[[[a,b],c],[[d,e],f]],[g,h]]
\\[-8pt]
- \,
&
[[[[[a,b],c],d],[e,f]],[g,h]]
+
[[[[[a,b],c],[d,e]],f],[g,h]]
\\[2pt]
+ \, 2 \,
&
[[[[[a,b],c],d],[[e,f],g]],h]
+ 3 \,
[[[[[a,b],c],[[d,e],f]],g],h]
\\[-2pt]
+ \, 2 \,
&
[[[[[[a,b],c],d],[e,f]],g],h]
- 2 \,
[[[[[[a,b],c],[d,e]],f],g],h]
\, \Big)
\equiv 0,
\end{align*}
where $\varepsilon(\sigma)$ is the sign of $\sigma \in S_8$ which is applied to $a,b,c,d,e,f,g,h$.
\end{theorem}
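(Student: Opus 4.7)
The plan is to follow the computational framework of Section 2 specialized to degree 8 and, as the preceding discussion signals, to isolate the new identity from the sign representation of $S_8$.

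First I would assemble a generating set for the $S_8$-module $LY(8)$ via the lifting procedure of Definition \ref{definitionlifting}. Using the reduced generating sets of 48 identities for $LY(6)$ and 154 identities for $LY(7)$, the eight binary liftings of each degree-7 generator together with the eight ternary liftings of each degree-6 generator give $(154+48)\cdot 8 = 1616$ elements that span $LY(8)$, a sharp reduction from the naive count of 18144. In parallel, I would enumerate the $s_8 = 74$ skew-symmetries of Table \ref{skew8} that live among the 23 binary types of Table \ref{binary8}.

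Next I would run the representation-theoretic test for each of the 22 partitions $\pi$ of 8. For each $\pi$, write every lifted identity as a vector in $(FS_8)^{23}$ (one component per association type, ordered with the binary types last), apply $R_\pi$ block by block to form $L_\pi$, reduce to $\mathrm{RCF}(L_\pi)$, and extract the submatrix $A_\pi$ whose leading 1s occupy columns indexed by binary types. Build $\overline{B}_\pi$ from the 74 skew-symmetries, reduce to $B_\pi$, and test whether the row space of $A_\pi$ lies inside the row space of $B_\pi$. Any row of $A_\pi$ that escapes this containment encodes a genuine new binary identity for that partition.

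For 21 of the 22 partitions the containment holds, and the entire contribution comes from $\pi = 1^8$, where $d_\pi = 1$ and the computation can be performed over $\mathbb{Q}$. The matrix $A_{1^8}$ of Table \ref{rcflowerright8} has rank 11 with pivot columns $\{1,2,3,4,5,8,11,13,16,19,22\}$, while $B_{1^8}$ of Table \ref{rcfskew8} has rank 10 with pivot columns $\{1,2,3,5,8,11,13,16,19,22\}$; inspection shows that the ten rows of $A_{1^8}$ whose pivots coincide with $B_{1^8}$ are literally identical to the rows of $B_{1^8}$, so exactly one row---row 4, with pivot in column 4---fails to lie in the row space of $B_{1^8}$. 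Reading its entries $1,-\tfrac32,-1,1,2,3,2,-2$ at columns $4,7,9,10,14,18,20,21$ against Table \ref{binary8} gives eight monomials, and because we are in the sign representation each attached group-algebra element is the alternating sum $\sum_{\sigma \in S_8} \varepsilon(\sigma)\,\sigma$ acting on $(a,b,c,d,e,f,g,h)$. This reproduces the polynomial displayed in the theorem, and the very fact that row 4 escapes $B_{1^8}$ is the statement that the identity is not a consequence of anticommutativity.

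The main obstacle is not the sign-representation step, which is essentially the small computation already on display, but the exhaustive sweep over the other 21 partitions that is required to certify both that nothing new appears outside $\pi = 1^8$ and that the same identity is not already forced by anticommutativity in larger representations. Several partitions produce $d_\pi$ in the fifties, so the matrices $L_\pi$ have on the order of $10^5$ rows and $10^3$ columns, which forces modular arithmetic modulo a prime $p > 8$; the semisimplicity of $FS_8$ for such $p$ guarantees that the ranks---and therefore the row-space containments---agree with those over $\mathbb{Q}$, so the conclusion drawn from the sign representation is the complete picture in degree 8.
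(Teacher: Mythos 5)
Your proposal is correct and follows essentially the same route as the paper: lifting the reduced degree-6 and degree-7 generating sets to obtain 1616 generators of $LY(8)$, sweeping all 22 partitions with the $A_\pi$ versus $B_\pi$ row-space comparison (modular arithmetic for the large representations, rationals for the sign representation), and reading off row 4 of the rank-11 matrix in Table \ref{rcflowerright8} against the binary types of Table \ref{binary8} to obtain the stated alternating-sum identity. Your column/coefficient bookkeeping matches the paper's tables exactly, so nothing further is needed.
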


\begin{corollary}
Let $LY(X)$ be the free Lie-Yamaguti algebra on the set $X$ of generators over a field of characteristic 0 or $p > 2$.
Let $BLY(X)$ the subalgebra of $LY(X)$ generated by $X$ using only the bilinear operation.
If $|X| \ge 8$ then $BLY(X)$ is not a free anticommutative algebra.
\end{corollary}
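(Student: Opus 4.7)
The plan is a direct argument by contradiction, using the explicit identity from the Theorem as a witness for non-freeness. Since $|X| \ge 8$, fix eight distinct generators $a,b,c,d,e,f,g,h \in X$, and let $P = P(a,b,c,d,e,f,g,h) \in LY(X)$ denote the left-hand side of the identity in the Theorem applied to these elements. Because every term of $P$ is built using only the bilinear bracket, $P$ lies in $BLY(X)$; and by the Theorem, $P = 0$ in $LY(X)$, hence also in $BLY(X)$.

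Next, let $\mathrm{Ac}(X)$ denote the free anticommutative algebra on $X$ in the sense of Shirshov. Its universal property, applied to the anticommutative bracket $[-,-]$ on $BLY(X)$ together with the inclusion $X \hookrightarrow BLY(X)$, furnishes a surjective algebra homomorphism $\phi : \mathrm{Ac}(X) \to BLY(X)$ fixing $X$ pointwise. The claim that $BLY(X)$ is a free anticommutative algebra on $X$ is exactly the statement that $\phi$ is an isomorphism, so it suffices to produce a nonzero element of $\ker \phi$. Interpreting the formal expression defining $P$ inside $\mathrm{Ac}(X)$ yields a candidate element $\widetilde P \in \mathrm{Ac}(X)$ with $\phi(\widetilde P) = P = 0$.

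The only step requiring attention is to verify that $\widetilde P \ne 0$ in $\mathrm{Ac}(X)$, which is exactly the content of the Theorem's assertion that $P$ is \emph{not} a consequence of anticommutativity. Unpacking: $\mathrm{Ac}(X)$ is the quotient of the free nonassociative algebra on $X$ by the ideal generated by all $[u,u]$, so in the multilinear degree-$8$ component on the chosen eight generators, $\widetilde P = 0$ would mean that $P$ lies in the span of all multilinear substitution instances of $[u,u] \equiv 0$, i.e.\ in the row space of the skew-symmetry matrix of Table \ref{rcfskew8}. The Theorem's proof shows precisely that the row of Table \ref{rcflowerright8} representing $P$ does not belong to that row space. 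Hence $\widetilde P \in \ker \phi \setminus \{0\}$, $\phi$ is not injective, and $BLY(X)$ is not a free anticommutative algebra. The main (essentially only) conceptual point is this translation between the informal phrase ``not a consequence of anticommutativity'' used in the Theorem and the formal nonvanishing $\widetilde P \ne 0 \in \mathrm{Ac}(X)$; once that correspondence is made, the corollary requires no further computation.
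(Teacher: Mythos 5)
Your argument is correct and is exactly the intended deduction: the paper states the Corollary without proof, as an immediate consequence of the Theorem, and your unpacking (the identity $P$ vanishes in $BLY(X)$ while its preimage $\widetilde P$ in the free anticommutative algebra is nonzero precisely because row 4 of Table \ref{rcflowerright8} lies outside the row space of Table \ref{rcfskew8}, so the canonical surjection $\mathrm{Ac}(X)\to BLY(X)$ has nontrivial kernel) is the reasoning the paper intends. The translation you highlight between ``not a consequence of anticommutativity'' and $\widetilde P\ne 0$ in $\mathrm{Ac}(X)$ is indeed the only conceptual point, and you handle it correctly.
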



\end{document}